\begin{document}
%\linenumbers
\begin{frontmatter}

\titledata{Ban--Linial's Conjecture and treelike snarks}{To Charlie\ldots for gallantry}  % title of the paper

\authordata{Jean Paul Zerafa}
{Department of Technology and Entrepreneurship Education\\University of Malta, Malta; \\
Department of Computer Science, Faculty of Mathematics, Physics and Informatics\\ Comenius University, Mlynsk\'{a} Dolina, 842 48 Bratislava, Slovakia}
{zerafa.jp@gmail.com}
{The author was partially supported by VEGA 1/0813/18 and VEGA 1/0743/21.}

\keywords{2-bisection, treelike snark, vertex-colouring}
\msc{05C15}

\begin{abstract}
A bridgeless cubic graph $G$ is said to have a $2$-bisection if there exists a $2$-vertex-colouring of $G$ (not necessarily proper) such that: (i) the colour classes have the same cardinality, and (ii) the monochromatic components are either an isolated vertex or an edge. In 2016, Ban and Linial conjectured that every bridgeless cubic graph, apart from the well-known Petersen graph, admits a $2$-bisection. In the same paper it was shown that every Class I bridgeless cubic graph admits such a bisection. The Class II bridgeless cubic graphs which are critical to many conjectures in graph theory are known as snarks, in particular, those with excessive index at least $5$, that is, whose edge set cannot be covered by four perfect matchings. Moreover, in [\emph{J. Graph Theory}, \textbf{86(2)} (2017), 149--158], Esperet \emph{et al.} state that a possible counterexample to Ban--Linial's Conjecture must have circular flow number at least $5$. The same authors also state that although empirical evidence shows that several graphs obtained from the Petersen graph admit a $2$-bisection, they can offer nothing in the direction of a general proof. Despite some sporadic computational results, until now, no general result about snarks having excessive index and circular flow number both at least $5$ has been proven. In this work we show that treelike snarks, which are an infinite family of snarks heavily depending on the Petersen graph and with both their circular flow number and excessive index at least $5$, admit a $2$-bisection.
\end{abstract}

\end{frontmatter}

\section{Introduction}

All graphs considered in the sequel are finite and simple (without loops and multiple edges). A \emph{bisection} of a cubic graph $G$ is a partition of its vertex set into two disjoint subsets $\mathcal{B}$ and $\mathcal{W}$ such that $|\mathcal{B}|=|\mathcal{W}|$. For simplicity, we shall identify a bisection of $G$ with the $2$-vertex-colouring (not necessarily proper) of $G$, in which every vertex in $\mathcal{B}$ and $\mathcal{W}$ is given colour $1$ (black) and $2$ (white), respectively. In the figures that follow, black and white vertices shall be depicted as filled and unfilled circular vertices, respectively. To distinguish between coloured and uncoloured vertices, the latter shall have a black square shape. A \emph{monochromatic component} is a connected component induced by a colour class of a vertex-colouring. A \emph{$k$-bisection} of a graph $G$ is a bisection of $G$ such that each monochromatic component consists of at most $k$ vertices. Therefore, a \emph{$2$-bisection} is a bisection in which each monochromatic component is a single vertex or an edge. 
A $2$-colouring of the vertices of a graph is said to be \emph{balanced} if the two color classes are of the same cardinality. Otherwise, it is said to be \emph{unbalanced}. 

In this paper we shall consider $2$-bisections in bridgeless cubic graphs, in particular, with respect to the following recent conjecture by Ban and Linial.

\begin{conjecture}[Ban--Linial \cite{banlinial}]\label{banlinial conjecture}
Every bridgeless cubic graph admits a $2$-bisection, except for the Petersen graph. 
\end{conjecture}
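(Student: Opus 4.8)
The plan is to argue by induction on $|V(G)|$, using small edge cuts to reduce to the cyclically $4$-edge-connected case and then splitting according to the chromatic class of $G$. First I would dispose of the $3$-edge-colourable graphs immediately: the excerpt already records that every Class~I bridgeless cubic graph admits a $2$-bisection, so throughout the inductive step one may assume $G$ is a snark. For the reduction, I would treat nontrivial $2$-edge-cuts and $3$-edge-cuts by replacing each side of the cut with a smaller cubic gadget, solving the two smaller instances, and gluing the colourings along the cut. Since the gluing must preserve both the ``component $\le 2$'' condition at the cut edges and the global balance $|\mathcal{B}|=|\mathcal{W}|$, I would strengthen the induction hypothesis to a boundary-controlled statement that prescribes the colours on the endpoints of the cut edges and records the colour surplus modulo the relevant parity; this is what lets two near-balanced solutions combine into a balanced one (recall $|V(G)|$ is always even for a cubic graph, so the target classes have integer size). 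The outcome of this phase is to reduce Conjecture~\ref{banlinial conjecture} to cyclically $4$-edge-connected snarks.

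For the core case I would build the bisection from a $2$-factor. By Petersen's theorem $G$ has a perfect matching $M$, and $F:=E(G)\setminus M$ is then a $2$-factor. Colouring each cycle of $F$ by a proper alternation forces every monochromatic edge to lie in $M$, so the monochromatic subgraph is a submatching and the colouring is automatically a $2$-bisection; moreover an alternating colouring of an even cycle is perfectly balanced. The obstruction is exactly that a cubic graph admits a $2$-factor with all cycles even if and only if it is Class~I, so for a snark every $2$-factor has a positive, and necessarily even, number of odd cycles. On each odd cycle a nearly alternating colouring must leave a single monochromatic \emph{defect} edge and contributes an imbalance of one. The strategy is therefore to choose $F$ with as few odd cycles as possible and then to select, for each cycle, its alternation phase and the position of its defect so that (i) no defect edge is incident with a monochromatic edge of $M$, keeping every monochromatic component of size at most two, and (ii) the unit imbalances of the odd cycles cancel in pairs, restoring $|\mathcal{B}|=|\mathcal{W}|$.

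The hard part is realising (i) and (ii) simultaneously. The phases and defect positions of distinct cycles interact only through the matching $M$, so I would encode the admissible choices as a constraint system on an auxiliary graph whose vertices are the cycles of $F$ and whose edges record the matching connections; a $2$-bisection then corresponds to a consistent assignment that avoids the forbidden defect/matching coincidences and meets the parity condition. Solvability of this system is precisely where the circular flow number of $G$ intervenes, matching the observation of Esperet \emph{et al.} that any counterexample must have circular flow number at least $5$, and it is also where the Petersen graph appears as the unique unavoidable obstruction. I expect this coordination step to be the genuine difficulty: a proof valid for \emph{all} cyclically $4$-edge-connected snarks would require either a flow-based certificate guaranteeing a consistent assignment, or a discharging/reducibility argument ruling out the bad configurations, and no structural classification of cyclically $4$-edge-connected snarks (equivalently, of those with circular flow number at least $5$) is currently available to drive such an argument in full generality. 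It is this coordination problem --- tractable when the snark is assembled from tightly controlled Petersen-like blocks, but open in the general case --- that concentrates the whole difficulty of Conjecture~\ref{banlinial conjecture}.
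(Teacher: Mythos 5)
The statement you were asked to prove is a \emph{conjecture}: the paper does not prove it, and it remains open. The paper's actual result (Theorem \ref{main theorem}) is only the special case of treelike snarks, established by induction on the defining tree: contract a pair of near leaves to obtain a smaller treelike snark, take its $2$-bisection, and extend it back by a case analysis on the boundary colours of the two outer Petersen $4$-poles, using the catalogue of balanced and unbalanced colourings of a Petersen $4$-pole to recolour the middle one. Your proposal, by contrast, attempts the full conjecture, and it is not a proof --- as you yourself concede in the final paragraph. Two concrete gaps. First, your reduction to cyclically $4$-edge-connected snarks rests on a ``boundary-controlled'' strengthened induction hypothesis that you never formulate: gluing $2$-bisections across a $2$- or $3$-edge-cut is genuinely delicate because the balance condition $|\mathcal{B}|=|\mathcal{W}|$ is global, not local, and the monochromatic-component condition must survive at the cut edges; without a precise statement of what is prescribed on the boundary and a proof that every small side realises all needed boundary types, this phase is empty. (Note also that the paper explicitly remarks there is no evidence that a minimal counterexample is cyclically $4$-edge-connected, so this reduction is itself an open problem, not a routine preprocessing step.) Second, and decisively, the core ``coordination'' step --- choosing alternation phases and defect edges on the odd cycles of a $2$-factor so that no defect meets a monochromatic matching edge and the unit imbalances cancel --- is exactly the content of the conjecture for snarks with excessive index at least $5$, and you offer no certificate of solvability for your constraint system, no mechanism forcing the Petersen graph to be the unique obstruction, and no discharging or flow argument in its place. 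Declaring that the difficulty concentrates there is accurate, but it is a research program, not a proof.

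It is worth seeing how the paper sidesteps precisely the wall you hit: rather than seeking a coordination argument valid for all snarks, it restricts to graphs assembled from Petersen $4$-poles, where every possible $2$-vertex-colouring of a block with monochromatic components of order at most two is classified up to symmetry (all $i$-balanced, $(i,j)$-balanced, and the four unbalanced types). The induction then never has to solve a global constraint system; it only has to check finitely many boundary cases, in each of which one of the catalogued colourings of the middle block repairs both balance and the component condition. Your $2$-factor strategy is the natural general-purpose one (it is essentially how Class I graphs are handled: an even $2$-factor plus alternating colourings), but for snarks that cannot be covered by four perfect matchings it is exactly where all known approaches stall.
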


We remark that $2$-bisections are equivalent to $4$-weak bisections introduced by Esperet \emph{et al.} in \cite{esperet17}, although, in general, the two definitions do not coincide. In \cite{esperet17}, it was shown that every cubic graph (not necessarily bridgeless) admits a $3$-bisection. Recently this result was extended to the class of simple subcubic graphs by Cui and Liu \cite{cuiliu}, and to the class of subcubic multigraphs by Mattiolo and Mazzuoccolo \cite{mattiolo}. Abreu \emph{et al.} proved Ban--Linial's Conjecture for cycle permutation graphs \cite{abreu}, and for bridgeless claw-free cubic graphs \cite{abreuclaw}. 

In \cite{esperet17}, Esperet \emph{et al.} also showed that a possible counterexample to Ban--Linial's Conjecture must admit circular flow number at least $5$, where $5$ is the largest possible circular flow number according to Tutte's $5$-Flow Conjecture (see \cite{tarsi}). Furthermore, since properly $3$-edge-colourable cubic graphs (Class I cubic graphs) admit a $2$-bisection (see Proposition 7 in \cite{banlinial}), for a possible counterexample to Conjecture \ref{banlinial conjecture} one must search in the class of bridgeless cubic graphs which do not admit a proper $3$-edge-colouring (Class II bridgeless cubic graphs). The graphs in the latter class are the notorious \emph{snarks} which are critical to many conjectures in graph theory (see for example \cite{snarky}). We note that in literature, the word snark is often reserved to Class II bridgeless cubic graphs which are cyclically $4$-edge-connected and have girth at least $5$, however, in what follows, we shall not be specifically making use of this refinement because, as already mentioned in \cite{abreu}, there is no evidence that a minimal counterexample to Ban--Linial's Conjecture is cyclically $4$-edge-connected and has girth at least $5$, if such a counterexample exists. Therefore, in the sequel, snarks shall represent Class II bridgeless cubic graphs.

The least number of perfect matchings needed to cover the edge set of a bridgeless cubic graph $G$ is said to be the \emph{excessive index} of $G$, and the Berge--Fulkerson Conjecture states that every bridgeless cubic graph has excessive index at most $5$ (see \cite{fulkerson,mazzuoccolo}). Class I cubic graphs have excessive index $3$, whilst snarks have excessive index at least $4$. In general, snarks having excessive index exactly $4$ seem to be more manageable than the rest, that is, than those snarks which cannot be covered by four perfect matchings. In fact, two very well-known conjectures---the Cycle Double Cover Conjecture and the \linebreak Fan--Raspaud Conjecture---are true for graphs with excessive index at most $4$ (see \cite{hou, Steffen, ZhangBook} and \cite{fanraspaud,fouquet}, respectively), whilst they are still widely open for the other snarks. Snarks which cannot be covered by four perfect matchings have been the subject of many papers such as \cite{esperetmazzuoccolo,snarky,macajova}. 

As already mentioned before, two infinite families of bridgeless cubic graphs which were shown to satisfy Ban--Linial's Conjecture are cycle permutation graphs and bridgeless claw-free cubic graphs. In \cite{fouquet}, cycle permutation graphs were shown to have excessive index at most $4$, apart from the Petersen graph which has excessive index $5$. On the other hand, it is conjectured \cite{vahan} that bridgeless claw-free cubic graphs have excessive index at most $4$ (see also \cite{521,522}).

Snarks with excessive index $4$ can have circular flow number $5$ (see \cite{goedgebeur}), and so can be critical to Conjecture \ref{banlinial conjecture}. At the same time, snarks with excessive index at least $5$ can have circular flow number strictly less than $5$ \cite{macajova circular}, and consequently admit a $2$-bisection by \cite{esperet17}. However, as far as we know, apart from very few sporadic computational results (see Observation 4.35\footnote[2]{The Petersen graph is the only snark up to $36$ vertices which does not admit a $2$-bisection.} in \cite{abreu}), no snarks having both their circular flow number and excessive index at least $5$ were studied with respect to Ban--Linial's Conjecture. 
Furthermore, Esperet \emph{et al.} \cite{esperet17} stated that several graphs obtained from the Petersen graph were checked with respect to Ban--Linial's Conjecture and they do in fact admit a $2$-bisection (or equivalently, a $4$-weak bisection), however, the authors say that they are far from offering something in the direction of a general proof. In this sense, we think that our main result dealing with treelike snarks, whose circular flow number and excessive index are both at least $5$, is another step towards further providing evidence to the correctness of Conjecture \ref{banlinial conjecture} and surpassing previous encountered hurdles, since treelike snarks heavily depend on the Petersen graph.

\section{Treelike snarks}

Treelike snarks were introduced by Abreu \emph{et al.} \cite{treelike} in an attempt to further enrich the family of snarks which cannot be covered by four perfect matchings. In the same paper, the authors also show that the circular flow number of treelike snarks is at least $5$, and that these snarks admit a $5$-cycle double cover. 

Before proceeding with the definition of these snarks, we introduce multipoles which generalise the notion of graphs. A \emph{multipole} $M$ consists of a set of vertices $V(M)$ and a set of generalised edges such that each generalised edge is either an edge in the usual sense (that is, it has two endvertices) or a dangling edge.  A \emph{dangling edge} is a generalised edge having exactly one endvertex, and a \emph{$k$-pole} is a multipole with $k$ dangling edges.
The set of dangling edges of $M$ is denoted by $\partial M$ whilst the set of edges of $M$ having two endvertices is denoted by $E(M)$. 
Two dangling edges are \emph{joined} if they are both deleted and their endvertices are made adjacent. In a similar way, a dangling edge with endvertex $x$ is said to be \emph{joined} to a vertex $y$, if the dangling edge is deleted and $x$ and $y$ are made adjacent.

Let $s$ and $t$ be two adjacent vertices of the Petersen graph $P$ and let $P'$ be the graph obtained after deleting $s$ and $t$ (and all the edges incident to them) from $P$.
Consider the $4$-pole $M$ obtained from $P'$ such that $V(M)=V(P')$, $E(M)=E(P')$, and each vertex of degree $2$ in $P'$ has exactly one dangling edge incident to it in $M$. We define the \emph{left dangling edges} to be the ones corresponding to the edges originally incident to $s$ and not $t$, and the \emph{right dangling edges} to be the ones corresponding to the edges originally incident to $t$ and not $s$. We also partition the four dangling edges in ordered pairs, say $(l_1,l_2)$, referred to as the first and second left dangling edges, and $(r_1,r_2)$, referred to as the first and second right dangling edges. Due to the symmetry of the Petersen graph the order of the dangling edges in each set of the partition is not relevant, but for simplicity and consistency, we shall assume that $(l_1,l_2)$ and $(r_1,r_2)$ look as in Figure \ref{figure petersen 4pole}. This $4$-pole is said to be a \emph{Petersen $4$-pole}.

\begin{figure}[h]
      \centering
      \includegraphics[width=0.65\textwidth]{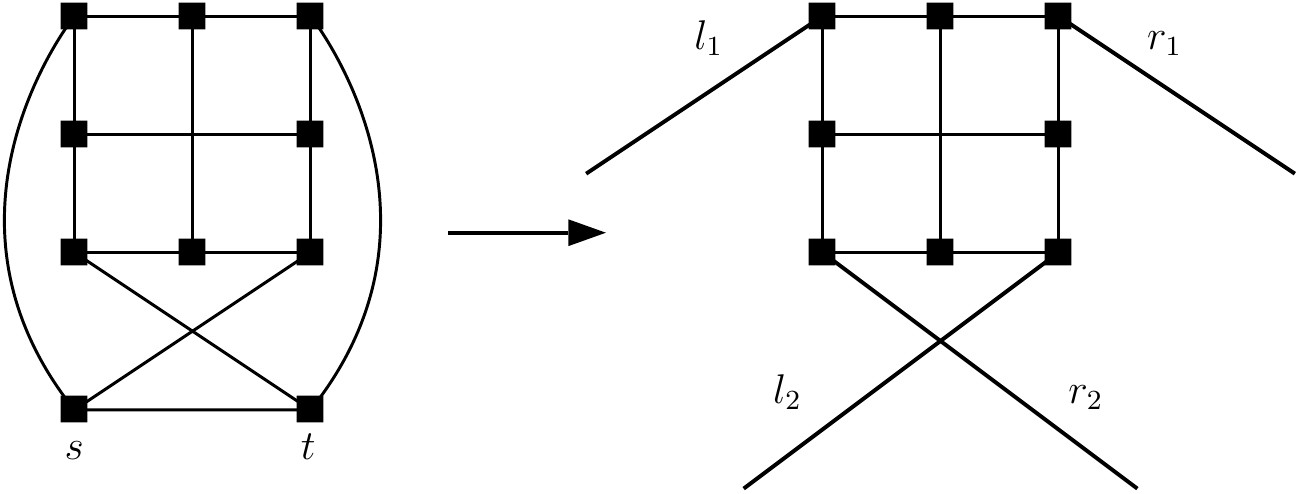}
      \caption{The Petersen graph and a Petersen $4$-pole}
      \label{figure petersen 4pole}
\end{figure}

A \emph{Halin graph} is a plane graph consisting of a planar representation of a tree without degree $2$ vertices, and a circuit on the set of its leaves (see \cite{Hal64}). We remark that leaves are the vertices in a tree having degree $1$.
Let $H$ be a cubic Halin graph consisting of the tree $T$ and the circuit $K$. A \emph{treelike snark} $G$ is the bridgeless cubic graph that can be obtained by the following procedure:
\begin{itemize}
\item for every leaf $x$ of $T$, we add two new vertices, say $x_1$ and $x_2$, and the edges $xx_1$ and $xx_2$; and
\item for every edge $xy$ of $K$, with $x$ being the predecessor of $y$ with respect to the clockwise orientation of $K$, the edge $xy$ is replaced with a Petersen $4$-pole, such that the first and second left dangling edges of this Petersen $4$-pole are joined to $x_1$ and $x_2$, respectively, whilst the first and second right dangling edges are joined to $y_1$ and $y_2$, respectively.
\end{itemize}

Two leaves $x$ and $y$ of $T$ are called \emph{consecutive} if they are adjacent in the circuit $K$, and we shall say that the Petersen $4$-pole of $G$ replacing the edge $xy$ of $K$ is \emph{in between} the two leaves $x$ and $y$.
Moreover, two consecutive leaves are said to be \emph{near} if they have distance two in $T$, that is, they have a common neighbour in $T$. We remark that $T$ always has two near leaves. Similarly, two Petersen $4$-poles $A$ and $B$ are called \emph{consecutive} if there exist three consecutive leaves $x,y,z$ (that is, $x$ and $y$ are consecutive and $y$ and $z$ are consecutive) such that $A$ is in between $x$ and $y$, and $B$ is in between $y$ and $z$. Again, we say that the leaf $y$ is \emph{in between} the Petersen $4$-poles $A$ and $B$.

We next consider various $2$-vertex-colourings of a Petersen $4$-pole, not necessarily balanced, in which each monochromatic component consists of at most two vertices. For simplicity, the latter shall be referred to as the \emph{monochromatic property}. A $2$-vertex-colouring of the Petersen $4$-pole is said to be \emph{all $1$-balanced} if only the four endvertices of the dangling edges are coloured $1$ (see Figure \ref{figure type a}). For short, a Petersen $4$-pole given this colouring is said to be an all $1$-balanced Petersen $4$-pole. An all $2$-balanced Petersen $4$-pole is defined in a similar way by interchanging the colours $1$ and $2$ in the $4$-pole.

\begin{figure}[h]
      \centering
      \includegraphics[width=0.75\textwidth]{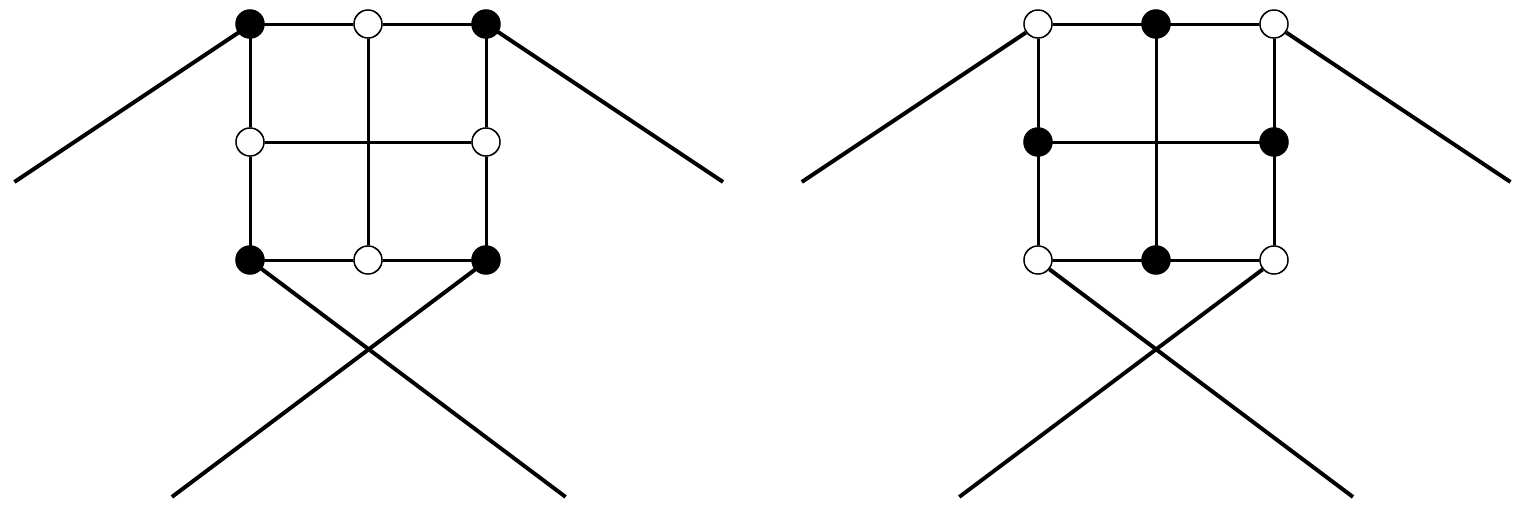}
      \caption{An all $1$-balanced and an all $2$-balanced Petersen $4$-pole}
      \label{figure type a}
\end{figure}

We also consider a balanced $2$-vertex-colouring of a Petersen $4$-pole (respecting the monochromatic property) in which exactly two endvertices of the four dangling edges are coloured $1$. Such a colouring is said to be \emph{$(i,j)$-balanced}, where $i$ and $j$ belong to $\{1,2\}$ and are equal to the colour of the endvertices of the first left and first right dangling edges, respectively (see Figure \ref{figure type b}). A Petersen $4$-pole having this colouring is said to be an $(i,j)$-balanced Petersen $4$-pole. When $i$ and $j$ are equal, say to $1$, we distinguish between the \emph{left} and \emph{right} $(1,1)$-balanced Petersen $4$-poles as follows: the left (similarly right) $(1,1)$-balanced colouring corresponds to the one in which the endvertex of the first left (similarly right) dangling edge has a neighbour in the Petersen $4$-pole which is coloured $1$.

\begin{figure}[h]
      \centering
      \includegraphics[width=0.75\textwidth]{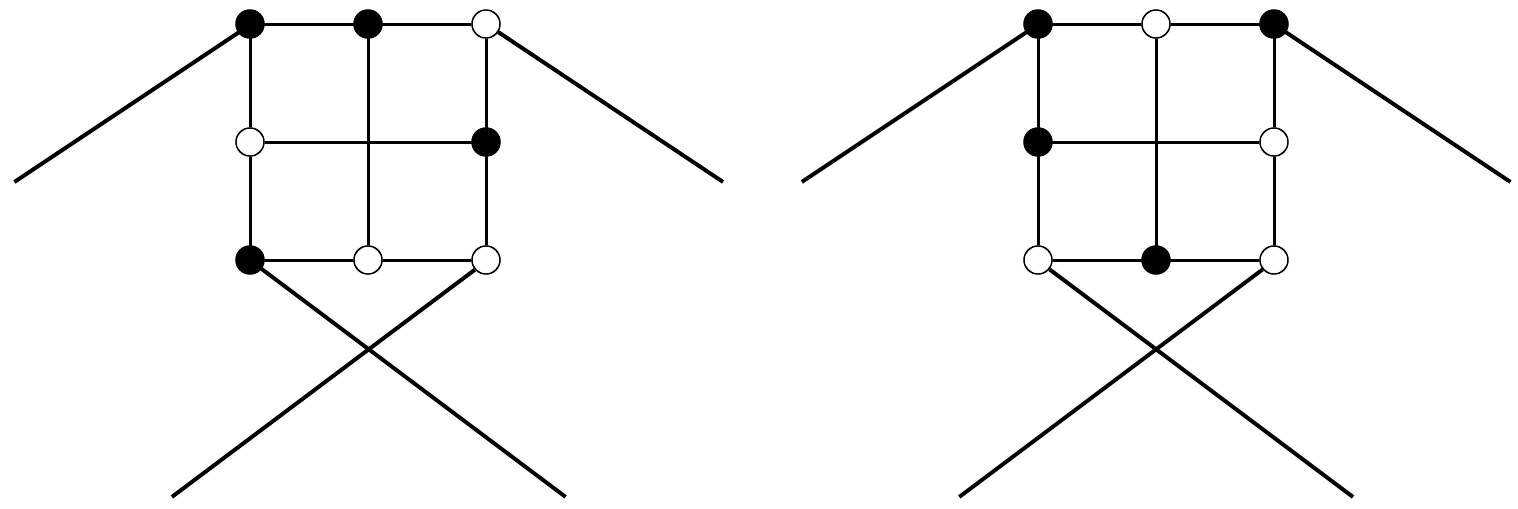}
      \caption{A $(1,2)$-balanced and a left $(1,1)$-balanced Petersen $4$-pole}
      \label{figure type b}
\end{figure}

Finally, we consider a $2$-vertex-colouring of a Petersen $4$-pole which once again respects the monochromatic property but is now unbalanced, that is, the two colour classes do not have the same cardinality. In this colouring $|\mathcal{B}|=|\mathcal{W}|+2$ (or vice-versa) and exactly three of the endvertices of the dangling edges of a Petersen $4$-pole are given the same colour, say $1$, without loss of generality (see Figure \ref{figure type c}). If, for example, the endvertex of the first left (similarly right) dangling edge is the one having colour $2$, the colouring and the Petersen $4$-pole are said to be \emph{{\scriptsize $^{2}$}$1$-unbalanced} (similarly $1${\scriptsize $^{2}$}-unbalanced). The {\scriptsize $_{2}$}$1$- and $1${\scriptsize $_{2}$}-unbalanced Petersen $4$-poles are defined in a similar manner.

\begin{figure}[h]
      \centering
      \includegraphics[width=0.75\textwidth]{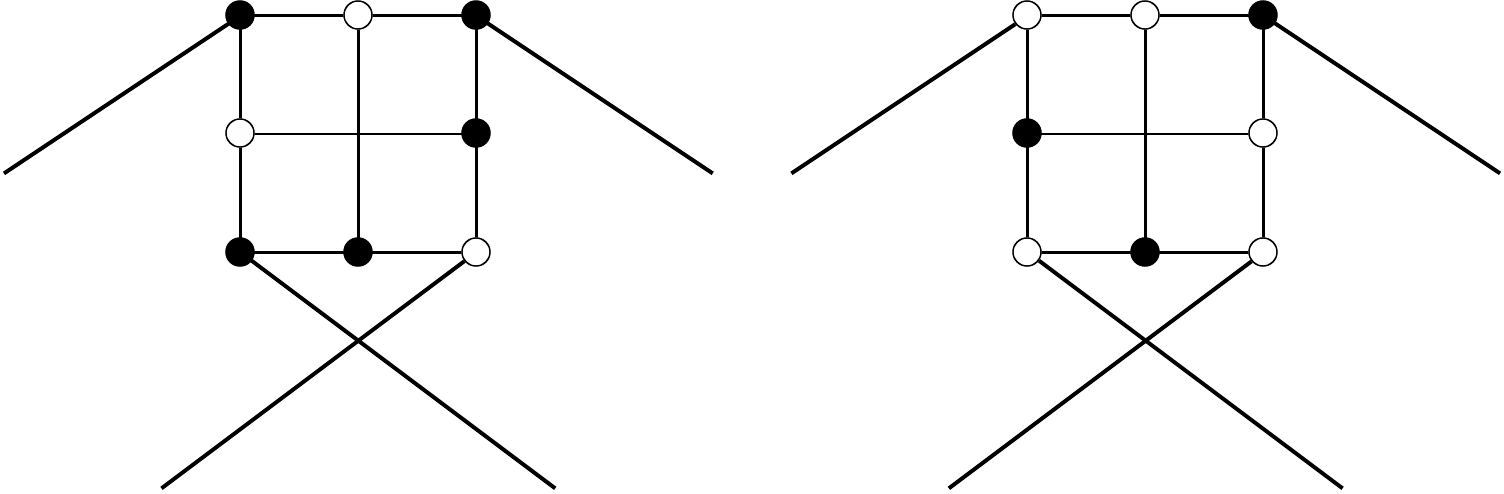}
      \caption{A {\scriptsize $_{2}$}$1$-unbalanced and a $2${\scriptsize $^{1}$}-unbalanced Petersen $4$-pole}
      \label{figure type c}
\end{figure}

One can clearly see that the above constitute (up to symmetry) all the possible $2$-vertex-colourings of a Petersen $4$-pole in which each monochromatic component has at most two vertices.

\subsection{Main result}
\begin{theorem}\label{main theorem}
Every treelike snark admits a $2$-bisection.
\end{theorem}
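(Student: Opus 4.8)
The plan is to build the $2$-bisection of a treelike snark $G$ out of the various coloured Petersen $4$-poles catalogued before the theorem. Each such $4$-pole has a fixed internal colouring satisfying the monochromatic property, and the crucial bookkeeping is to track the colours forced on the endvertices of its four dangling edges. When two dangling edges are joined, their endvertices are identified by the colouring only indirectly: recall that a left dangling edge of the $4$-pole in between $x$ and $y$ is joined to a new vertex $x_i$ (one of the two pendant vertices attached to the leaf $x$ of $T$), and the same $x_i$ receives a right dangling edge from the $4$-pole on the other side of $x$. So the colour of $x_i$ is shared between the two consecutive $4$-poles meeting at $x$. First I would set up this interface: for each leaf $x$ of $T$ with pendant vertices $x_1,x_2$ and common tree-neighbour carrying the leaf, I would record what colour pattern the $4$-poles on either side demand at $x_1,x_2$, and check that the monochromatic components straddling the join (the edge $x_1$--(dangling endvertex) and the edge $x_2$--(dangling endvertex)) stay within size two.

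Next I would organise the construction around the tree $T$ and its circuit $K$. Since $K$ is a cycle through the leaves and every edge of $K$ hosts one Petersen $4$-pole, the global colouring is a cyclic concatenation of $4$-pole colourings around $K$, glued at the pendant vertices $x_1,x_2$. The key idea is to pick, for each $4$-pole, one of the admissible colourings (all $1$-balanced, all $2$-balanced, $(i,j)$-balanced in its several flavours, or one of the unbalanced types) so that (a) adjacent $4$-poles agree on the shared pendant-vertex colours, (b) no monochromatic component across a join exceeds two vertices, and (c) the global colour classes come out equal in size. Balance is the arithmetic constraint: the balanced types contribute $0$ to the running black-minus-white count while the unbalanced types contribute $\pm 2$, so I would use the unbalanced $4$-poles sparingly and in canceling pairs to force the totals to match, exploiting the freedom given by the internal vertices (which are not at the interface) to absorb any parity slack.

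The natural proof engine is a traversal of $T$ rooted so as to exploit the existence of two near leaves. Because $T$ has no degree-$2$ vertices and two near leaves always exist (as noted in the excerpt), I would peel off the tree leaf-by-leaf, or branch-by-branch, assigning each successive $4$-pole a colouring consistent with the already-coloured pendant vertices; the pair of near leaves provides a base case where two consecutive $4$-poles share a common tree-neighbour and can be coloured together in a controlled way. I expect the argument to reduce to a finite local case analysis at each leaf $y$ that is in between two consecutive $4$-poles $A$ and $B$: given the colours of the dangling endvertices that $A$ forces at $y_1,y_2$, I must choose $B$'s colouring (from the catalogue) so that the join is monochromatic-safe and so the colour of $y_1,y_2$ propagates consistently to $B$'s far side.

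The hard part will be simultaneously satisfying the local compatibility constraints at every join and the single global balance constraint. Locally there are several legitimate colourings of each $4$-pole, but the choices are coupled all the way around the cycle $K$, so a greedy left-to-right assignment may paint itself into a corner when the cycle closes up, and the overall black/white tally must still be exactly equal. I would handle this by proving a local \emph{transition lemma}: for every colour pattern that a $4$-pole can impose on its right pendant pair, there is a compatible colouring of the next $4$-pole realising any desired pattern on its own right pendant pair, while keeping track of the $\pm 2$ balance contributions. With such a lemma, closing the cycle becomes a matter of choosing the sequence of types so that the accumulated imbalance returns to zero, which the availability of both a type and its colour-swapped mirror (all $1$- versus all $2$-balanced, the two $(1,1)$ orientations, and the $^21$- versus $1^2$-unbalanced variants) should make achievable. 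The delicate point is ensuring that this can be done for an arbitrary underlying cubic Halin graph $H$, i.e.\ for an arbitrary branching structure of $T$, rather than merely for a single cycle of $4$-poles; here the tree traversal and the near-leaf base case are what let me reduce the general branching case to the manageable local transitions.
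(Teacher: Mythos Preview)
Your plan is a direct, constructive one: march around the cycle $K$, choosing for each Petersen $4$-pole a colouring from the catalogue so that consecutive $4$-poles agree at the shared pendant vertices, and then arrange for the global black/white counts to cancel. The paper does something quite different. It argues by minimal counterexample on $|V(T)|$: take two near leaves $x,y$ with common neighbour $u$, delete $x,y,u$ and the Petersen $4$-pole $B$ between $x$ and $y$, and glue the two loose ends to obtain a strictly smaller treelike snark $G'$ with a single new leaf $z$. By minimality $G'$ has a $2$-bisection; one then extends it to $G$ by a finite case analysis on the colours of the four interface vertices $a_1,a_2,c_1,c_2$ (the dangling endvertices of the neighbouring $4$-poles $A$ and $C$), choosing colours for $x,y,u,x_1,x_2,y_1,y_2$ and a colouring type for $B$. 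The induction thus reduces everything to one local patch, and the colouring of the rest of $T$ and of all other $4$-poles comes for free from $G'$.

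Your sketch has two genuine gaps relative to this. First, you never say how the \emph{internal} vertices of the tree $T$ get coloured. These vertices lie in $G$, must satisfy the monochromatic property, interact with the colours of the leaves (and hence with your pendant-vertex choices), and contribute to the global balance. Your whole bookkeeping lives on the cycle $K$ and the $4$-poles, so the tree colouring is simply missing; the paper's inductive reduction is precisely what makes this issue disappear. Second, the ``transition lemma'' you posit --- that for any incoming right pendant pattern one can pick the next $4$-pole's colouring to realise any desired outgoing right pattern --- is stronger than what actually holds. In the paper's case analysis (Cases I and III) the extension sometimes fails unless one goes back and \emph{recolours a vertex inside the already-coloured neighbouring $4$-pole} $A$ (switching $A$ from an unbalanced to a balanced type, or vice versa). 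So a purely forward, greedy assignment around $K$ cannot work in general; you need either the inductive setup that lets you tweak the existing colouring locally, or a considerably more delicate global argument than the one you outline.
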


\begin{proof}
Let $G$ be a treelike snark. Suppose that $G$ does not admit a $2$-bisection. We assume that the defining tree $T$ of $G$ is of minimum order. Observation 4.35 in \cite{abreu} states that the Petersen graph is the only snark up to $36$ vertices which does not admit a $2$-bisection. Consequently, the smallest treelike snark on $34$ vertices has a $2$-bisection (see also Figure \ref{figure windmill}), and so we can assume that $T$ has at least two degree $3$ vertices.
\begin{figure}[h]
      \centering
      \includegraphics[width=0.75\textwidth]{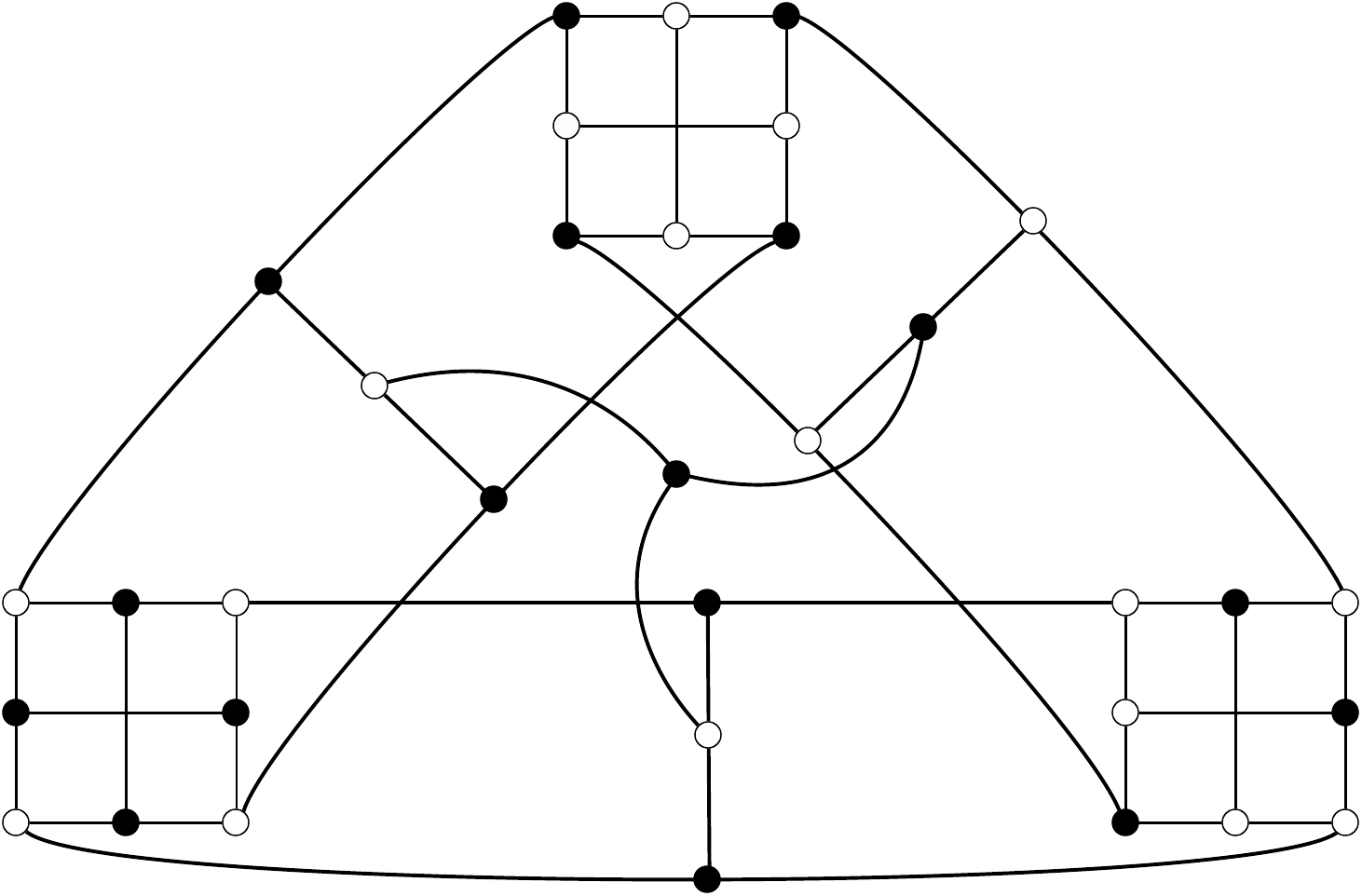}
      \caption{A $2$-bisection of the smallest treelike snark}
      \label{figure windmill}
\end{figure}
As depicted in Figure \ref{figure induction}, consider two near leaves of $T$, say $x$ and $y$, and let $e$ and $f$ be the two edges of $T$ incident to $x$ and $y$, respectively. Moreover, let $g$ be the edge of $T$ adjacent to $e$ and $f$. Consider the three consecutive Petersen $4$-poles $A,B,C$ such that $x$ is in between $A$ and $B$, and $y$ is in between $B$ and $C$. Let $a_{1}$ and $a_{2}$ be the two endvertices of the first and second right dangling edges of the Petersen $4$-pole $A$, respectively, and let $c_{1}$ and $c_{2}$ be the two endvertices of the first and second left dangling edges of the Petersen $4$-pole $C$, respectively. Let $x_{1}$ and $x_{2}$ be the two neighbours of $x$ in $G$ adjacent to $a_{1}$ and $a_{2}$, respectively. Let $y_{1}$ and $y_{2}$ be the two neighbours of $y$ in $G$ adjacent to $c_{1}$ and $c_{2}$, respectively. Let $u$ be the vertex in $T$ mutually adjacent to $x$ and $y$, and let $v$ be the other neighbour of $u$ in $T$.

We construct a smaller treelike snark $G'$ following the procedure presented in Figure \ref{figure induction}. Since $T$ has at least two vertices of degree $3$, the resulting graph $G'$ is indeed another treelike snark. Let $T'$ be the tree defining $G'$. As in Figure \ref{figure induction}, $z$ is the leaf in between the Petersen $4$-poles $A$ and $C$ in $G'$. In this case, let $z_{1}$ and $z_{2}$ be the common neighbours of $a_{1}$ and $c_{1}$, and $a_{2}$ and $c_{2}$, respectively.

\begin{figure}[h]
\centering
      \includegraphics[width=1\textwidth]{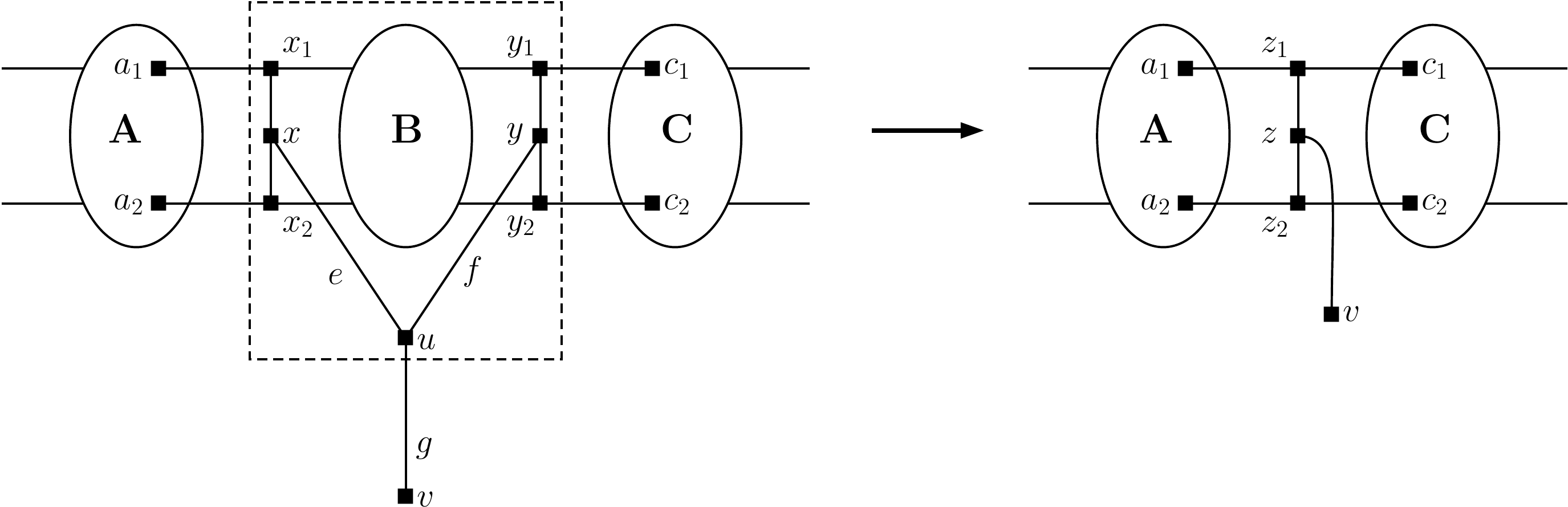}
\caption{Induction step in Theorem \ref{main theorem}}
\label{figure induction}
\end{figure}

Since $|V(T')|<|V(T)|$, $G'$ admits a $2$-bisection. In what follows we show that we can always extend the $2$-vertex-colouring corresponding to a $2$-bisection of $G'$, to a $2$-vertex-colouring of $G$ which shall give rise to a $2$-bisection of the bigger treelike snark $G$, a contradiction to our initial assumption. We consider five cases, depending on the colours of $a_{1},a_{2},c_{1},c_{2}$ in $G'$.\\

\noindent\textbf{Case I.} The colours of $a_{1},a_{2},c_{1},c_{2}$ in $G'$ are all the same. 

Without loss of generality, assume that the colours of $a_{1},a_{2},c_{1},c_{2}$ are all $1$. Consequently, the colours of $(z_{1},z,z_{2})$ are $(2,1,2)$. The colour given to $v$ can be either $1$ or $2$. Suppose first that the colour of $v$ is $1$. We colour $G$ as follows. The vertices in $G$ corresponding to $V(G')-\{z_{1},z,z_{2}\}$ are assigned the same colour given to them in $G'$, and the vertices $(x_{1},x,x_{2})$ and $(y_{1},y,y_{2})$ are both assigned the colours $(2,1,2)$, that is, the colours given to $(z_{1},z,z_{2})$ in $G'$. The vertex $u$ is given colour $2$ and consequently, this partial $2$-vertex-colouring of $G$, that is, with the vertices of $B$ still uncoloured, respects the monochromatic property, and, so far, there are two more vertices coloured $2$. However, since the colours of $a_{1},a_{2},c_{1},c_{2}$ are all $1$, and the colours of $x_{1},x_{2},y_{1},y_{2}$ are all $2$, we can let $B$ be a ${1}${\scriptsize $_{2}$}-unbalanced Petersen $4$-pole, giving a $2$-bisection of $G$, a contradiction to our initial assumption.
Therefore, $v$ must be coloured $2$ in $G'$. The vertices in $G$ corresponding to $V(G')-\{z_{1},z,z_{2}\}$ are assigned the same colour given to them in $G'$, and the vertices $(x_{1},x,x_{2})$ and $(y_{1},y,y_{2})$ are assigned the colours $(2,2,1)$ and $(2,1,2)$, respectively. The vertex $u$ is coloured $1$ as $z$. Consequently, this partial $2$-vertex-colouring is balanced. If $a_{2}$ does not have a neighbour in $A$ which is coloured $1$, then every monochromatic component in this partial $2$-vertex-colouring has order at most two. In this case, if we colour $B$ as a left $(1,1)$-balanced Petersen $4$-pole, a $2$-bisection of $G$ is obtained, a contradiction. Therefore, $a_{2}$ must have a neighbour in $A$ coloured $1$. Since $a_{1}$ is coloured $1$, $A$ is either a {\scriptsize $^{2}$}$1$- or a {\scriptsize $_{2}$}$1$-unbalanced Petersen $4$-pole. Consequently, if we change the colour of $a_{2}$ from $1$ to $2$, $A$ becomes a $(2,1)$- or a right $(1,1)$-balanced Petersen $4$-pole, giving a partial $2$-vertex-colouring of $G$ respecting the monochromatic property, with the vertices coloured $2$ now being two more than those coloured $1$. However, colouring $B$ as a {\scriptsize $_{2}$}$1$-unbalanced Petersen $4$-pole, a $2$-bisection of $G$ is obtained, a contradiction. Thus, Case I cannot occur.\\

\textbf{Case II.} In $G'$, the vertices $a_{1},a_{2}$ are coloured $1$ and the vertices $c_{1},c_{2}$ are coloured $2$, or vice-versa.

Without loss of generality, assume that the colours of $a_{1},a_{2}$ are $1$, and the colours of $c_{1},c_{2}$ are $2$. We first claim that the colours of $z_{1}$ and $z_{2}$ must be the same. For, suppose not. If $z_{1}$ is coloured $1$, then $z$ must be coloured $2$, but this implies that $z,z_{2},c_{2}$ are all coloured $2$, a contradiction. Thus, without loss of generality, we assume that $z_{1},z_{2}$ are coloured $2$, implying that $z$ is coloured $1$. We consider two cases, depending on whether $v$ is coloured $1$ or $2$. Suppose first that $v$ is coloured $1$. We colour $G$ as follows. The vertices in $G$ corresponding to $V(G')-\{z_{1},z,z_{2}\}$ are assigned the same colour given to them in $G'$, and the vertices $(x_{1},x,x_{2})$ and $(y_{1},y,y_{2})$ are both assigned the colours $(2,1,2)$. The vertex $u$ is given colour $2$ and consequently, every monochromatic component in this partial $2$-vertex-colouring has order at most two. However, there are two more vertices coloured $2$ so far. Since the colours of $a_{1},a_{2}$ are both $1$, and the colours of $x_{1},x_{2}$ are both $2$, we can let $B$ either be a {\scriptsize $_{2}$}$1$- or a {\scriptsize $^{2}$}$1$-unbalanced Petersen $4$-pole, giving a $2$-bisection of $G$. This is a contradiction, and so $v$ must be coloured $2$ in $G'$. As before, we proceed by colouring the vertices in $G$ corresponding to $V(G')-\{z_{1},z,z_{2}\}$ with the same colour given to them in $G'$. The vertices $(x_{1},x,x_{2})$ are assigned the colours $(2,1,2)$, and since $c_{1},c_{2}$ are both coloured $2$, we can colour $(y_{1},y,y_{2})$ with the colours $(1,2,1)$. The vertex $u$ is coloured $1$ as $z$. One can see that all the monochromatic components in this partial $2$-vertex-colouring have order at most two, but, so far, there are two more vertices coloured $1$. If we let $B$ be a {\scriptsize $_{1}$}$2$-unbalanced Petersen $4$-pole, we obtain a $2$-bisection of $G$, a contradiction once again. Therefore, we cannot have Case II.\\

\textbf{Case III.} In $G'$, the vertices $a_{1},c_{1}$ are coloured $1$ and the vertices $a_{2},c_{2}$ are coloured $2$, or vice-versa.

Without loss of generality, assume that the colours of $a_{1},c_{1}$ are $1$, and the colours of $a_{2},c_{2}$ are $2$. In this case, $(z_{1},z_{2})$ must be coloured $(2,1)$. Without loss of generality, assume that $v$ is coloured $1$. Therefore, $z$ must be coloured $2$. We colour $G$ as follows. The vertices in $G$ corresponding to $V(G')-\{z_{1},z,z_{2}\}$ are assigned the same colour given to them in $G'$, and the vertices $(x_{1},x,x_{2})$ and $(y_{1},y,y_{2})$ are assigned the colours $(1,2,1)$ and $(2,1,1)$, respectively. The vertex $u$ is given colour $2$ as $z$. If the vertex $a_{1}$ has no neighbour in $A$ coloured $1$, then this partial $2$-vertex-colouring respects the monochromatic property and, so far, there are two more vertices coloured $1$. However, letting $B$ be a $2${\scriptsize $^{1}$}-unbalanced Petersen $4$-pole gives a $2$-bisection of $G$,  a contradiction. Therefore, $a_{1}$ has a neighbour in $A$ coloured $1$. Therefore, $A$ must be a right $(1,1)$- or a $(2,1)$-balanced Petersen $4$-pole. If we change the colour $1$ of $a_{1}$ to $2$, $A$ becomes a {\scriptsize $^{1}$}$2$- or a {\scriptsize $_{1}$}$2$-unbalanced Petersen $4$-pole. Because of this step, the resulting partial $2$-vertex-colouring of $G$ is now balanced and respects the monochromatic property. However, if we let $B$ to be the all $2$-balanced Petersen $4$-pole, we get that $G$ has a $2$-bisection, a contradiction. Hence, we cannot have Case III.\\

\textbf{Case IV.} In $G'$, the vertices $a_{1},c_{2}$ are coloured $1$ and the vertices $a_{2},c_{1}$ are coloured $2$, or vice-versa.

Without loss of generality, assume that the colours of $a_{1},c_{2}$ are $1$, and the colours of $a_{2},c_{1}$ are $2$. As in Case II, $z_{1}$ and $z_{2}$ must have the same colour, and so, without loss of generality, we can assume that these two vertices are both coloured $2$. Consequently, $z$ must be coloured $1$. We consider two cases depending on whether $v$ is coloured $1$ or $2$. Suppose first that the colour of $v$ is $1$. We extend the colouring of $G'$ to $G$ as follows. The vertices in $G$ corresponding to $V(G')-\{z_{1},z,z_{2}\}$ are assigned the same colour given to them in $G'$, and the vertices $(x_{1},x,x_{2})$ and $(y_{1},y,y_{2})$ are both assigned the colours $(2,1,2)$. The vertex $u$ is given colour $2$ and consequently, every monochromatic component in this partial $2$-vertex-colouring has order at most two, and, so far, there are two more vertices coloured $2$. However, letting $B$ be a {\scriptsize $^{2}$}$1$-unbalanced Petersen $4$-pole gives a $2$-bisection of $G$, a contradiction. Therefore, $v$ must be coloured $2$. The vertices in $G$ corresponding to $V(G')-\{z_{1},z,z_{2}\}$ are assigned the same colour given to them in $G'$, but this time the vertices $(x_{1},x,x_{2})$ and $(y_{1},y,y_{2})$ are assigned the colours $(2,2,1)$ and $(2,1,2)$, respectively. The vertex $u$ is coloured $1$ as $z$. As a result, we can see that this partial $2$-vertex-colouring is balanced and respects the monochromatic property. However, letting $B$ be an all $1$-balanced Petersen $4$-pole gives a $2$-bisection of $G$, a contradiction once again. So we must have Case V.\\

\textbf{Case V.} Exactly three of $a_{1},a_{2},c_{1},c_{2}$ have the same colour in $G'$.

Without loss of generality, assume that $a_{1},a_{2},c_{1}$ have colour $1$ and $c_{2}$ has colour $2$. Consequently, the colour of $z_{1}$ is $2$. We consider three cases depending on the colours of $(v,z,z_{2})$. When the colour of $v$ is $1$, $(z,z_{2})$ can be coloured $(1,2)$ or $(2,1)$. Suppose we have the former case. We colour $G$ as follows. The vertices in $G$ corresponding to $V(G')-\{z_{1},z,z_{2}\}$ are assigned the same colour given to them in $G'$, and the vertices $(x_{1},x,x_{2})$ and $(y_{1},y,y_{2})$ are both assigned the colours $(2,1,2)$. The vertex $u$ is given colour $2$ and consequently, the resulting partial $2$-vertex-colouring respects the monochromatic property and, so far, there are two more vertices coloured $2$. However, if we let $B$ be a $1${\scriptsize $^{2}$}-unbalanced Petersen $4$-pole, a $2$-bisection of $G$ is obtained, a contradiction.
Therefore, the colours of $(z,z_{2})$ are $(2,1)$. We colour $G$ as follows. The vertices in $G$ corresponding to $V(G')-\{z_{1},z,z_{2}\}$ are assigned the same colour given to them in $G'$, but this time the vertices $(x_{1},x,x_{2})$ and $(y_{1},y,y_{2})$ are assigned the colours $(2,1,2)$ and $(2,1,1)$, respectively. The vertex $u$ is coloured $2$ as $z$, and consequently, the resulting partial $2$-vertex-colouring is balanced and respects the monochromatic property. However, if we let $B$ be a right $(1,1)$-balanced Petersen $4$-pole, a $2$-bisection of $G$ is obtained, a contradiction. 
Hence, we must have that the colours of $(v,z,z_{2})$ are $(2,1,2)$. The vertices in $G$ corresponding to $V(G')-\{z_{1},z,z_{2}\}$ are assigned the same colour given to them in $G'$, and the vertices $(x_{1},x,x_{2})$ and $(y_{1},y,y_{2})$ are assigned the colours $(2,1,2)$ and $(2,2,1)$, respectively. The vertex $u$ is given colour $1$ as $z$, and consequently, this partial $2$-vertex-colouring is balanced and every monochromatic component has order at most two. However, if we let $B$ be an all-1 balanced Petersen $4$-pole, a $2$-bisection of $G$ is obtained, a contradiction once again. \\

Since there are no more cases to consider, our initial assumption is wrong. Therefore, every treelike snark admits a $2$-bisection, proving our theorem.
\end{proof}

Finally, we remark that it would be quite intriguing to see whether the above proof can be further extended to accommodate a recent class of snarks which contains treelike snarks: \emph{Halin snarks}. These snarks, introduced in \cite{macajova} by M\'a\v{c}ajov\'a and \v{S}koviera, were shown to have excessive index at least $5$. Moreover, whenever the building blocks (referred to as Halin fragments) used for the construction of a Halin snark are exclusively made up of decollineators (see \cite{macajova} for definition) originating from snarks having circular flow number at least $5$, the circular flow number of the resulting Halin snark is at least $5$ as well. In the above proof, we explicitly took advantage of the symmetry of Petersen $4$-poles and of the behaviour of $2$-vertex-colourings (and their corresponding monochromatic components) in such a pole. In this sense, we remark that in general it is still not clear what assumptions need to be made with regards to the building blocks of Halin snarks in order to tackle this more general problem.

\end{document}